\newtheorem{theorem}{Theorem}    
\newtheorem{claim}{Claim}
\newtheorem{question}{Question} 
\theoremstyle{definition}
\newtheorem{definition}[theorem]{Definition}
\newtheorem*{remark*}{Remark}
\newcommand{\Z}{\mathbb{Z}}
\newcommand{\ent}{\mathrm{ent}}
\newcommand{\ex}{\mathrm{ex}}
\begin{document}

\title[Infinitely many non-conjugate braids]{A non-degenerate exchange move always produces infinitely many non-conjugate braids }
\author[T.Ito]{Tetsuya Ito}
\address{Department of Mathematics, Kyoto University, Kyoto 606-8502, JAPAN}
\email{tetitoh@math.kyoto-u.ac.jp}
\subjclass[2010]{Primary~57M25, Secondary~57M27}
\keywords{Braid representative, closed braid, exchange move}

\begin{abstract}
We show that if a link $L$ has a closed $n$-braid representative admitting non-degenerate exchange move, an exchange move that does not obviously preserve the conjugacy class, $L$ has infinitely many non-conjugate closed $n$-braid representatives.
\end{abstract}

\maketitle


Let $B_n$ be the braid group with standard generators $\sigma_1,\ldots,\sigma_{n-1}$. We denote the closure of a braid $\beta \in B_n$ by $\widehat{\beta} \subset S^{3}$. For an oriented link $L \subset S^{3}$ let $\mathbf{Br}_n(L)= \{\beta \in B_n \: | \:\widehat{\beta}=L\}$ be the set of $n$-braids whose closures are $L$.

The set $\mathbf{Br}_n(L)$ may contain infinitely many mutually non-conjugate braids. However, Birman and Menasco proved a remarkable (non)finiteness theorem \cite{BM}: $\mathbf{Br}_n(L)$ modulo \emph{exchange move} $\alpha \sigma_{n-1}\beta \sigma_{n-1}^{-1} \leftrightarrow \alpha \sigma_{n-1}^{-1}\beta \sigma_{n-1}$ ($\alpha,\beta \in B_{n-1}$) has only finitely many conjugacy classes. 
In particular, when $\mathbf{Br}_n(L)$ does not contain a braid admitting exchange move, $\mathbf{Br}_n(L)$ contains only finitely many conjugacy classes. 

Then we ask the converse\footnote{We need to be bit  careful to formulate the problem since some exchange moves are `trivial' in the sense that they obviously yield conjugate braids.}:
 \emph{Does $\mathbf{Br}_n(L)$ contain infinitely many mutually non-conjugate braids if $\mathbf{Br}_n(L)$ contains a braid admitting exchange move ?}

This question was studied in \cite{SS,Sh,St1,St2} where it was shown that under some additional and technical assumptions, iterations of exchange move indeed produce infinitely many non-conjugate braids. 

In this note we give a simpler and shorter proof of infiniteness under the weakest assumption. We use a formulation of iterations of exchange moves following \cite{SS}:
\begin{definition} 
We say that an $n$-braid $\beta$ \emph{admits an exchange move} if one can write $\beta=AB$ for $A \in \langle \sigma_1^{\pm 1},\ldots,\sigma_{n-2}^{\pm 1}\rangle$ and $B \in \langle \sigma_2^{\pm 1},\ldots,\sigma_{n-1}^{\pm 1}\rangle$. 

For $k \in \Z$ and an $n$-braid $\beta=AB$ admitting an exchange move, the \emph{$k$-iterated exchange move} of $\beta=AB$ is the braid $\ex^{k}(\beta)= A\tau^{k}B\tau^{-k}$ where $\tau=(\sigma_2\cdots \sigma_{n-2})^{n-2} \in B_n$.
We say that an (iterated) exchange move is \emph{degenerate} if $A \tau = \tau A$ or $B \tau = \tau B$. Otherwise, an (iterated) exchange move is \emph{non-degenerate}.

\end{definition}

A $k$-iterated exchange move is attained by exchange move $|k|$ times so the closures of $\beta$ and $\ex^k(\beta)$ represent the same link. A degenerate exchange move is an exchange move that obviously preserves the conjugacy classes. Our main theorem shows that, except this trivial cases, iterated exchange moves \emph{always} produce infinitely many mutually non-conjugate braids.

We identify the braid group $B_n$ with the mapping class group $MCG(D_n)$ of the $n$-punctured disk $D_n$. Let $\ent(\beta)$ be the topological entropy of $\beta$, the infimum of the topological entropy of homeomorphism that represents $\beta$. 

\begin{theorem}
\label{theorem:main}
If $\beta \in \mathbf{Br}_n(L)$ admits a non-degenerate exchange move, then the set $\{\ent(\ex^k(\beta)) \: | \: k \in \Z\}$ is unbounded. In particular, the set  $\{\ex^{k}(\beta) \: | \: k \in \Z\} \subset \mathbf{Br}_n(L) $ contains infinitely many distinct conjugacy classes.
\end{theorem}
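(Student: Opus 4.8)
The plan is to study the action of $\ex^k(\beta)$ on the $n$-punctured disk via the Nielsen--Thurston theory and extract growth of entropy from the word $A\tau^k B\tau^{-k}$. The key geometric idea is that $\tau = (\sigma_2\cdots\sigma_{n-2})^{n-2}$ is (a power of) a periodic element that acts as a full rotation, hence is central in the subgroup $B_{n-2}$ generated by $\sigma_2,\ldots,\sigma_{n-2}$; its conjugates $\tau^k$ act as high powers of a Dehn twist along the boundary curve $c$ enclosing punctures $2,\ldots,n-1$. So the natural approach is to find an essential curve or arc system on which we can measure how the mapping class $\ex^k(\beta)$ stretches things, and show the stretching grows with $|k|$.

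Concretely, I would proceed as follows. First, observe that a bound on entropy gives a bound on the conjugacy classes: since $\ent$ is a conjugacy invariant taking values in a discrete-like set (entropies of $n$-braids accumulate only at finitely many small values, or more simply, finitely many conjugacy classes would force finitely many entropy values), unboundedness of $\{\ent(\ex^k(\beta))\}$ immediately yields infinitely many conjugacy classes, proving the ``in particular'' clause. Second, reduce to a curve-complex or lamination estimate: let $c$ be the curve bounding a disk containing punctures $\{2,\ldots,n-1\}$, so that $\tau$ is the $(n-2)$-fold power of the point-pushing/rotation and $\tau^k$ acts like $T_c^{k}$ up to a bounded correction; then $A$ fixes $c$ as well (since $A \in \langle\sigma_1,\ldots,\sigma_{n-2}\rangle$ also fixes the curve $c$, being supported away from the last puncture on the relevant side) — wait, more carefully, $A$ fixes the curve $\partial$ around punctures $1,\ldots,n-1$ and $B$ fixes the curve around $2,\ldots,n$, while $\tau$ is supported on $c$ around $2,\ldots,n-1$. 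The non-degeneracy hypothesis $A\tau \neq \tau A$ and $B\tau\neq\tau B$ says precisely that $c$ is moved to an essentially different curve by $A$ and by $B$, i.e.\ $A(c) \neq c$ and $B(c)\neq c$ as isotopy classes (this is the content of $\tau$ being non-central in the relevant centralizer). Third, run a ping-pong / bounded-geodesic-image argument: the geometric intersection number $i(A(c), B(c))$ is positive, and applying $T_c^k$ in between amplifies intersection numbers so that $i\big((A\tau^kB\tau^{-k})^m(c),\, c\big)$ grows at least like $|k|^{m}$ (roughly), forcing the dilatation — hence entropy — to be at least $\log|k|$ minus a constant, uniformly in $m$ large. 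This is the standard mechanism by which high Dehn-twist powers inside a composition produce large-stretch pseudo-Anosov (or at least large-entropy reducible) maps.

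I expect the main obstacle to be the following: $\ex^k(\beta)$ need not be pseudo-Anosov, and its Nielsen--Thurston decomposition could vary with $k$; one must control entropy via a ``lower bound by action on a single curve'' lemma that holds regardless of the NT type. The cleanest route is to use the inequality $\ent(\phi) \ge \lim_{m}\frac{1}{m}\log i(\phi^m(\gamma),\gamma)$ (valid for any mapping class $\phi$ and essential curve $\gamma$, since intersection-number growth is bounded by the entropy), and then do the combinatorial estimate of iterated intersection numbers under $A T_c^{\pm k} B T_c^{\mp k}$. The delicate point there is ensuring the amplification does not cancel — i.e.\ that $A(c)$ and $B(c)$ are ``in general position'' with respect to $c$ in a way that survives iteration; here non-degeneracy is exactly what prevents the degenerate cancellation, and one formalizes it by a subsurface-projection or a train-track/Thurston-norm monotonicity estimate. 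Once the intersection numbers are shown to grow at a rate tending to infinity with $|k|$, unboundedness of entropy follows, and with it the infinitude of conjugacy classes among $\{\ex^k(\beta)\}$.

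Alternatively, if one prefers to avoid laminations entirely, the same conclusion can be reached by the Bestvina--Handel or the spectral-radius-of-the-Burau/homological-action approach, but I expect the curve-intersection argument above to be the shortest and to align with the paper's stated goal of a ``simpler and shorter proof.''
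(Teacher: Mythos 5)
Your high-level framing agrees with the paper in several places: the reduction of the ``in particular'' clause to unboundedness of entropy via conjugacy-invariance of $\ent$ is exactly right, and your identification of $\tau$ with the Dehn twist $T_c$ about the curve $c$ enclosing the middle punctures, together with the observation that non-degeneracy is equivalent to $A(c)\neq c$ and $B(c)\neq c$, is precisely the paper's first step. The overall strategy --- rewrite $\ex^{k}(\beta)$ (or a power of it) as a product of $k$-th powers of Dehn twists composed with a fixed mapping class, and show the stretching grows with $|k|$ --- is also the paper's.

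However, the core quantitative step is asserted rather than proved, and as stated it has a genuine gap. Writing $\ex^{k}(\beta)^{m}=T_{c_1}^{k}T_{c_2}^{-k}\cdots T_{c_{2m-1}}^{k}T_{c_{2m}}^{-k}\beta^{m}$ with $c_{2i-1}=\beta^{i-1}(A(c))$ and $c_{2i}=\beta^{i}(c)$, your claim that $i\bigl(\ex^{k}(\beta)^{m}(c),c\bigr)$ grows like $|k|^{m}$ does not follow from iterating the standard inequality $i(T_{a}^{n}(\gamma),\delta)\geq |n|\,i(\gamma,a)\,i(a,\delta)-i(\gamma,\delta)$: the subtracted correction terms involve the non-consecutive intersection numbers $i(c_i,c_j)$, which are \emph{not} bounded as $m\to\infty$ (they can grow like $e^{m\,\ent(\beta)}$), so no fixed $|k|$ dominates them and the induction does not close; controlling this is exactly the content of Fathi's interpolation inequality, which you would in effect have to reprove. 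Relatedly, the quantity you single out, $i(A(c),B(c))>0$, neither follows from non-degeneracy nor is it the one that matters --- the relevant consecutive intersections in the chain are $i(c,B(c))$ and $i(c,A(c))$. You also do not address the possibility that the orbit curves $\{\beta^{j}(c),\beta^{j}(A(c))\}$ fail to fill $D_n$, in which case a ``filling'' hypothesis needed for any such amplification argument is unavailable on the whole disk. The paper resolves both issues at once: it passes to the minimal $\beta$-invariant geodesic subsurface $S$ filled by finitely many of these curves (Claims~\ref{claim:A} and~\ref{claim:B}), fixes a single finite power $N$ so that the cyclic intersection conditions hold, and applies Theorem~\ref{theorem:Fathi} to the fixed product $T_{c_1}^{k}\cdots T_{c_{2N}}^{-k}\beta^{N}|_S$ to obtain a pseudo-Anosov of dilatation $>R$ for all large $|k|$ --- thereby avoiding the $m\to\infty$ limit entirely. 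Your sketch becomes a proof only once the iterated-intersection heuristic is replaced by such a finite-product statement (Fathi's theorem or an equivalent).
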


Let $S$ be a closed orientable surface minus finitely many disks and puncture points in its interior.
A simple closed curve $c$ in $S$ is \emph{essential} if $c$ is not boundary parallel nor surrounding single puncture. We denote by $T_c$ the Dehn twist along $c$. A family of essential simple closed curves $\{c_1,\ldots,c_N\}$ \emph{fills} $S$ if $\min i(c,c_i)\neq 0$ for any essential simple closed curve $c$, where $i(c,c')$ denotes the geometric intersection number. Our proof is based on the following theorem of Fathi \cite[Theorem 7.9]{Fa}.

\begin{theorem}
\label{theorem:Fathi}
Let $f \in MCG(S)$ and $c_1,\ldots,c_N$ be essential simple closed curves in $S$.
Assume that
\begin{enumerate}
\item[(i)] The set of curves $\{c_1,\ldots, c_N\}$ fills $S$.
\item[(ii)] $i(c_i,c_{i+1})\neq 0$ for $i=1,\ldots,N-1$ and $i(c_{N},c_1)\neq 0$.
\end{enumerate} 
Then for given $R>0$, there is $k=k(R)>0$ such that $ T_{c_1}^{n_{1}}T_{c_{2}}^{n_{2}}\cdots T_{c_N}^{n_N}f  $
is pseudo-Anosov whose dilation is $>R$ whenever $|n_i|>k$ for all $i$.

\end{theorem}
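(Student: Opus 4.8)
The plan is to study the action of $g=T_{c_1}^{n_1}T_{c_2}^{n_2}\cdots T_{c_N}^{n_N}f$ on Thurston's sphere of projective measured foliations $\mathcal{PMF}(S)$ and to show that, once all $|n_i|$ are large, $g$ exhibits source--sink (north--south) dynamics; by Thurston's classification this forces $g$ to be pseudo-Anosov, and tracking the contraction rate yields the dilatation bound. The basic input is the standard behavior of a high power of a single Dehn twist: for any measured foliations $\alpha,\beta$ one has
\[
\bigl| i(T_c^{n}\alpha,\beta) - |n|\,i(\alpha,c)\,i(c,\beta)\bigr| \le i(\alpha,\beta),
\]
so that whenever $i(\alpha,c)\neq 0$ the projective class $[T_c^{n}\alpha]$ converges to $[c]$ as $|n|\to\infty$, at rate $O(1/|n|)$ and uniformly on each set $\{\,i(\cdot,c)\ge\varepsilon\,\}$. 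Thus for large $|n_i|$ the twist $T_{c_i}^{n_i}$ is a strong contraction of $\mathcal{PMF}(S)$ toward the point $[c_i]$, except on the ``annular'' exceptional locus $A_i=\{\,[\mu]:i(\mu,c_i)=0\,\}$, on which it acts trivially.

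Next I would set up a ping-pong. Around each $[c_i]$ choose a neighborhood $U_i$. Condition (ii), namely $i(c_{i-1},c_i)\neq 0$ with indices read cyclically, says precisely that $[c_{i-1}]\notin A_i$, so after shrinking the $U_i$ the estimate shows that $T_{c_i}^{n_i}$ maps a fixed neighborhood of $U_{i-1}$ into $U_i$ once $|n_i|$ is large; composing around the cycle and absorbing the fixed map $f$ then shows that $g$ carries the complement of a small neighborhood of the exceptional directions into $U_1$. The filling hypothesis (i) is exactly what controls those directions: since $\{c_1,\ldots,c_N\}$ fills, no measured foliation lies in every $A_i$, so the exceptional directions are genuinely displaced by the composition and cannot be left fixed. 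Making this quantitative, for $|n_i|>k$ the map $g$ sends all of $\mathcal{PMF}(S)$, minus an arbitrarily small repelling set, into $U_1$, producing a unique attracting fixed point $[\mathcal{F}^+]$; running the identical argument for $g^{-1}$ produces a unique repelling fixed point $[\mathcal{F}^-]\neq[\mathcal{F}^+]$. Exactly two fixed points with source--sink dynamics on the sphere $\mathcal{PMF}(S)$ rules out the periodic and reducible cases of Thurston's trichotomy (a periodic or reducible class fixes infinitely many projective classes), so $g$ is pseudo-Anosov with invariant foliations $\mathcal{F}^{\pm}$. Finally, the dilatation $\lambda(g)$ is the expansion factor on intersection numbers, and the estimate above shows each twist inflates the relevant intersection numbers by a factor $\gtrsim |n_i|\,i(c_i,c_{i+1})$; hence $\lambda(g)\gtrsim\prod_i|n_i|$ grows without bound, and choosing $k=k(R)$ large enough makes $\lambda(g)>R$.

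The main obstacle is precisely the exceptional loci $A_i$: because $T_{c_i}^{n_i}$ does nothing to foliations disjoint from $c_i$, the clean statement ``contract everything toward $[c_i]$'' fails there, and the whole difficulty is to combine filling with the cyclic condition (ii) to guarantee that any such bad direction is always moved off by a neighboring twist before the composition could fix it. Converting this into a single threshold $k$ that works simultaneously for every large tuple $(n_1,\ldots,n_N)$ --- making the contraction uniform in all the $n_i$ at once, rather than sending one $n_i\to\infty$ at a time --- is the delicate quantitative heart of the argument.
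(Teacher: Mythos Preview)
Your outline is on the right track and is essentially Fathi's own approach, which is exactly what the paper invokes: the paper does not give an independent proof of this theorem but simply refers to \cite[Theorem 7.9]{Fa}, noting in the Remark that hypotheses (i) and (ii) are what allow Fathi's interpolation inequality \cite[Theorem 7.4]{Fa} to apply, and that the dilatation lower bound comes from adjusting a parameter $\varepsilon$ in Fathi's argument.

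The one substantive point worth flagging is the obstacle you yourself isolate at the end. Your paragraph passes from ``no measured foliation lies in every $A_i$'' to ``the exceptional directions are genuinely displaced by the composition,'' and then to a single threshold $k$ working for all tuples; this passage is where all the work lies, and your sketch does not supply the mechanism. In Fathi's proof that mechanism is precisely the interpolation inequality: under (i) and (ii) one obtains a uniform constant $K$ controlling \emph{every} intersection number $i(\alpha,\beta)$ in terms of the intersections with the $c_i$, and this is what converts the qualitative filling hypothesis into the uniform quantitative contraction you need. Without naming and using this inequality (or reproving something equivalent), the step remains a gap; with it, the rest of your ping-pong argument is indeed Fathi's proof. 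Your claimed product bound $\lambda(g)\gtrsim\prod_i|n_i|$ is stronger than what Fathi extracts, but for the statement at hand any bound tending to infinity with $\min_i|n_i|$ suffices, and that is what the paper's modified choice of $\varepsilon$ delivers.
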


\begin{remark*}
Although the statements and assumptions of Theorem 
\ref{theorem:Fathi} are bit different,  Theorem \ref{theorem:Fathi} follows from the proof of \cite[Theorem 7.9]{Fa}; First, the assumptions (i) and (ii) allow us to apply an interpolation inequality \cite[Theorem 7.4]{Fa}. Second, to get dilatation bound we take a choice of $\varepsilon>0$ in page 149 of the proof of \cite[Theorem 7.9]{Fa} as $\varepsilon=(K^{2}R^{2l-1})^{-1}$ instead of $(2K^{2})^{-1}$ in the original argument. Then the same argument gives the desired dilatation bound.
\end{remark*}

\begin{proof}[Proof of Theorem \ref{theorem:main}]
The braid $\tau$ in iterated exchange move corresponds to the Dehn twist $T_{c}$ along the simple closed curve $c$ surrounding $2$nd,...,($n-2$)th punctures. The non-degeneracy assumption is equivalent to saying that $A(c)\neq c$ and $B(c) \neq c$.

For $i>0$ let $c_{2i}=\beta^{i-1}(A(c))=(AB)^{i-1}A(c), c_{2i-1}=\beta^{i-1}(c)=(AB)^{i}(c)$. Then for $N>0$, 
$\ex^{k}(\beta)^{N} = T_{c_1}^{k}T_{c_2}^{-k}\cdots T_{c_{2N-1}}^{k}T_{c_{2N}}^{-k}\beta^{N}$.

We equip a complete hyperbolic metric (of finite area) on $D_n$ and take $c_i$ as a closed geodesic.
Let $S$ be the minimum complete geodesic subsurface of $D_n$ that contains $\{c_1,c_2,c_3,c_4,\ldots\}=\{c,A(c), \beta(c),\beta(A(c)),\ldots\}$. Here the minimum means the minimum with respect to inclusions. Then for sufficiently large $M$ the set of curves $\{c_1,c_2,\ldots,c_{M}\}$ fills $S$. By non-degeneracy assumption, $S$ contains $\partial D_n$ as its boundary.

\begin{claim}
\label{claim:A}
$\beta$ preserves the subsurface $S$ setwise. In particular, the restriction $\ex^{k}(\beta)|_S \in MCG(S)$ is well-defined for all $k$.
\end{claim}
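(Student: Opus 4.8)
The plan is to reduce the whole claim to the single assertion that $\beta(S)=S$ up to isotopy. Granting this, the ``in particular'' is immediate: since $\beta=AB$ and $\tau=T_{c}$,
\[
\ex^{k}(\beta)=A\tau^{k}B\tau^{-k}=(A\tau^{k}A^{-1})\,(AB)\,\tau^{-k}=T_{A(c)}^{k}\,\beta\,T_{c}^{-k}=T_{A(c)}^{k}\,T_{\beta(c)}^{-k}\,\beta ,
\]
where I used $\phi T_{c}\phi^{-1}=T_{\phi(c)}$. Now $A(c)=c_{2}$ and $\beta(c)=c_{3}$ both lie on $S$, so $T_{A(c)}$ and $T_{\beta(c)}$ preserve $S$ setwise; together with $\beta(S)=S$ this gives $\ex^{k}(\beta)(S)=S$, and then a representing homeomorphism preserving $S$ exists, making $\ex^{k}(\beta)|_{S}\in MCG(S)$ well defined and independent of the isotopy representative of $S$.

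To prove $\beta(S)=S$, I would first note that $\beta$ permutes the defining family of curves in the simplest possible way: from $c_{2i-1}=\beta^{i-1}(c)$ and $c_{2i}=\beta^{i-1}(A(c))$ one reads off $\beta(c_{j})=c_{j+2}$ for all $j\ge 1$, so $\beta$ carries $\{c_{1},c_{2},c_{3},\dots\}$ onto $\{c_{3},c_{4},c_{5},\dots\}$. The assignment ``minimal essential subsurface containing a given family of simple closed curves'' depends only on the isotopy classes of the curves, is monotone under inclusion of families, and is natural under mapping classes (the minimal \emph{geodesic} subsurface being merely the geodesic normal form of this isotopy class, so the fact that $\beta$ is not an isometry is irrelevant). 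Hence $\beta(S)$ is isotopic to the minimal subsurface containing $\{c_{3},c_{4},\dots\}$, which is contained in the one containing $\{c_{1},c_{2},\dots\}$, namely $S$. Thus $\beta(S)\subseteq S$ up to isotopy.

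The one genuine, if mild, point is to promote $\beta(S)\subseteq S$ to an equality, and this is where I expect to spend the effort. Here one uses that $\beta$ restricts to a homeomorphism $S\to\beta(S)$, so $\beta(S)$ is abstractly homeomorphic to $S$, and invokes the following routine fact: a connected essential subsurface $S'$ of a finite-type surface that is homeomorphic to the ambient surface $S$ is isotopic to $S$. To see it, classify the components of $\overline{S\setminus S'}$: essentiality of $S'$ rules out disc components, so all components have nonpositive Euler characteristic; since their total Euler characteristic is $\chi(S)-\chi(S')=0$, every component has $\chi=0$, hence is an annulus or a once-punctured disc, and essentiality rules out once-punctured discs; so every component is an annulus. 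An annulus joining two distinct boundary circles of $S'$ would change the genus or the number of boundary circles, contradicting $S'\cong S$, and an annulus that is an entire component of $S$ contradicts connectedness of $S$ (with $S'\ne\emptyset$); so every complementary component is a boundary collar and can be absorbed by an isotopy, giving $S'\simeq S$. Applying this with $S'=\beta(S)$ yields $\beta(S)=S$. (Alternatively: iterating $\beta(S)\subseteq S$ gives a nested chain $S\supseteq\beta(S)\supseteq\beta^{2}(S)\supseteq\cdots$ of subsurfaces all homeomorphic to $S$; since the complexity $3g-3+b+p$ is monotone along essential inclusions and bounded below, the chain stabilizes, and applying a suitable power of $\beta^{-1}$ gives $\beta(S)=S$.) One should also record that $\beta$ fixes $\partial D_{n}$ pointwise, consistently with $\partial D_{n}$ being a common boundary component of $S$ and of $\beta(S)$; and if $S$ is disconnected the same argument applies componentwise, with $\beta$ permuting the components.
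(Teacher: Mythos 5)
Your proposal is correct, but it reaches the conclusion by a genuinely different route than the paper. The paper's proof is a short contradiction argument: assuming $S\neq D_n$, it takes the multicurve $C=\partial S\setminus\partial D_n$ and asserts that $\beta(S)\neq S$ would force $i(\beta^{-1}(C),C)\neq 0$, hence by the filling property of $\{c_1,\dots,c_M\}$ that $i(C,c_{i+2})=i(\beta^{-1}(C),c_i)\neq 0$ for some $i$, contradicting $c_{i+2}\subset S$. You instead argue directly: since $\beta(c_j)=c_{j+2}$, naturality and monotonicity of the minimal essential subsurface give $\beta(S)\subseteq S$ up to isotopy, and you upgrade the inclusion to an equality by an Euler characteristic count showing that every complementary component of $\beta(S)$ in $S$ is a boundary collar. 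Your version is longer but more self-contained, and it quietly addresses the one delicate point in the paper's argument: the implication ``$\beta^{-1}(S)\neq S\Rightarrow i(\beta^{-1}(C),C)\neq 0$'' is not automatic, since properly nested geodesic subsurfaces have disjoint boundaries, and it is exactly this nesting possibility that your $\chi$-count rules out. You also make the ``in particular'' clause explicit via $\ex^{k}(\beta)=T_{A(c)}^{k}T_{\beta(c)}^{-k}\beta$ with both twisting curves contained in $S$, which the paper treats as immediate. Two small remarks: your indexing $c_1=A(c)$, $c_2=\beta(c)$ in that display differs from the paper's $c_1=c$, $c_2=A(c)$ (harmless here, since only membership of the twisting curves in $S$ is used); and your parenthetical ``stabilizing chain'' alternative still needs the $\chi$ (or complexity) argument to see that a non-isotopic essential inclusion strictly decreases something, so it is not really independent of your main argument.
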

\begin{proof}[Proof of Claim \ref{claim:A}]
If $S=D_n$ then it is obvious so we assume that $S \neq D_n$. Then $C=\partial S \setminus \partial D_n$ is a non-empty multi curve. If $\beta(S) \neq S$ then $\beta^{-1}(S) \neq S$ and $i(\beta^{-1}(C),C) \neq 0 $. Since $\{c_1,c_2,\ldots,c_{M}\}$ fills $S$ this means $i(\beta^{-1}(C),c_i)=i(C,\beta(c_i)) = i(C,c_{i+2})\neq 0$ for some $i$, this is a contradiction since $i(C,c_{i+2})=0$ by definition.
\end{proof}

\begin{claim}
\label{claim:B}
There exists $N\geq M$ such that $i(c_{1},c_{2N})\neq 0$.
\end{claim}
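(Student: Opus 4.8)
The plan is to set $f := \beta|_S \in MCG(S)$ (legitimate by Claim~\ref{claim:A}) and observe that $c_{2N} = f^{N-1}(c_2)$, so that Claim~\ref{claim:B} asks for some $N \ge M$ with $i(c_1, f^{N-1}(c_2)) \neq 0$. I will argue by contradiction, assuming $i(c_1, f^{n}(c_2)) = 0$ for all $n \ge M-1$. The first move is to dispose of degenerate dynamics: if the $f$-orbit of $c_2$ (or of $c_1$) is finite, pick $N \ge M$ with $f^{N-1}(c_2) = c_2$ (resp.\ $f^{-(N-1)}(c_1) = c_1$); then $i(c_1,c_{2N}) = i(c_1,c_2) = i(c,A(c)) \neq 0$ by non-degeneracy, a contradiction. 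So both orbits are infinite, and $\{c_{2N} : N \ge M\}$ is an infinite set of curves, each disjoint from $c_1$.

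Next I run the Nielsen--Thurston trichotomy for $f$ on $S$. The periodic case is excluded by the previous step. If $f$ is pseudo-Anosov, it acts on the curve complex $\mathcal{C}(S)$ loxodromically, so the curves $c_{2N} = f^{N-1}(c_2)$ escape every bounded subset of $\mathcal{C}(S)$; but each $c_{2N}$ is disjoint from $c_1$, hence lies in the ball of radius $1$ about $c_1$ in $\mathcal{C}(S)$ --- a contradiction. (Equivalently, $f^{N-1}(c_2)$ converges projectively to the attracting lamination of $f$, which fills $S$, so $i(c_1, c_{2N}) \to \infty$.)

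The reducible case is the crux. Let $\sigma$ be the canonical reduction system of $f$: a nonempty $f$-invariant multicurve. Replacing $f$ by a suitable power --- harmless, since we only need \emph{one} admissible $N$ --- we may assume $f$ fixes each component of $\sigma$ and each component of $S$ cut along $\sigma$, acting on each piece as the identity or as a pseudo-Anosov. Since $\sigma$ is $f$-invariant, $i(c_{2j-1}, \sigma) = i(c_1, \sigma)$ and $i(c_{2j}, \sigma) = i(c_2, \sigma)$ for all $j$; as $\{c_1,\dots,c_M\}$ fills $S$ and $\sigma$ is essential, at least one parity of the $c_i$ crosses $\sigma$. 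Consider first the case where the even-indexed curves are disjoint from $\sigma$: since $c_2$ has infinite orbit but a multitwist along $\sigma$ fixes every curve disjoint from $\sigma$, some piece $R$ of $S$ cut along $\sigma$ must be pseudo-Anosov and be met essentially by the orbit of $c_2$; then the subsurface projections $\pi_R(c_{2N})$ escape every bounded subset of $\mathcal{C}(R)$, while staying within distance $1$ of $\pi_R(c_1)$ (as $c_1$ is disjoint from each $c_{2N}$), which forces $\pi_R(c_1) = \emptyset$, i.e.\ $c_1$ misses $R$. Using that $S$ is the \emph{minimal} subsurface filled by the connected union $\bigcup_i c_i$ --- so that neither $c_1$ nor any of its $f$-images can be isotoped off the subsurface generated by the even-indexed curves, where $c_2$ lives and which contains $R$ --- one should now reach a contradiction; the case where the even-indexed curves also cross $\sigma$, and the symmetric case, are handled the same way, the partition of $S$ being read off from which $c_i$ meet which reducing curves and which pseudo-Anosov pieces.

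The step I expect to be the main obstacle is precisely the residual reducible configuration: $c_1$ avoids every pseudo-Anosov piece yet still crosses a reducing curve $\sigma_0$ along which $f$ twists, while every $c_{2N}$ is disjoint from $\sigma_0$. To rule this out I anticipate having to exploit the concrete structure of the data --- $\tau = T_{c_1}$, $\beta = AB$ with $A\in\langle\sigma_1,\dots,\sigma_{n-2}\rangle$ and $B\in\langle\sigma_2,\dots,\sigma_{n-1}\rangle$, together with the non-degeneracy $A(c_1)\neq c_1\neq B(c_1)$ --- or else to set up an induction on the number of strands; this is the point I would treat most carefully.
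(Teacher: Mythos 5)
Your reduction to the Nielsen--Thurston trichotomy disposes of the periodic and pseudo-Anosov cases correctly, but the reducible case --- which you yourself flag as the main obstacle --- is left genuinely open, and it is the essential case: under the contradiction hypothesis every $c_{2N}$ stays within distance $1$ of $c_1$ in the curve complex, so a reducible-type configuration is exactly the scenario one must expect and refute. Your sketch via canonical reduction systems and subsurface projections ends with ``one should now reach a contradiction'' precisely at the residual configuration (the orbit of $c_2$ confined to pieces that $c_1$ misses, while $c_1$ still crosses some reducing curve), and nothing in the sketch rules it out. As written, the proof is incomplete.

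The paper closes this gap without invoking the trichotomy at all, by one more application of the idea of Claim \ref{claim:A}: let $S'\subseteq S$ be the minimal geodesic subsurface containing the tail $\{c_{2M},c_{2(M+1)},\dots\}=\{c_{2M},\beta(c_{2M}),\beta^{2}(c_{2M}),\dots\}$ of the even-indexed orbit. The argument of Claim \ref{claim:A} shows $S'$ is $\beta$-invariant; under the contradiction hypothesis $c_1=c$ is disjoint from all the curves filling $S'$, hence from every curve contained in $S'$; and by $\beta$-invariance $A(c)=\beta^{-(M-1)}(c_{2M})\subset S'$, so $i(c,A(c))=0$, contradicting non-degeneracy (the same fact $i(c,A(c))\neq 0$ that you already rely on in your finite-orbit step). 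This invariant-subsurface observation is exactly the ingredient missing from your reducible case --- and once you have it, the trichotomy is unnecessary.
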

\begin{proof}[Proof of Claim \ref{claim:B}]
Assume to the contrary that $i(c_{1},c_{2N}) =0$ for all $N\geq M$. Let $S'$ be the minimum geodesic subsurface of $S$ that contains $\{c_{2M},c_{2(M+1)},\ldots\}=\{c_{2M},\beta(c_{2M}),\beta^{2}(c_{2M}),\ldots\}$. By the same argument as Claim \ref{claim:A}, $\beta$ preserves $S'$, and $i(c,c')=0$ for every curve $c' \subset S'$. Then $\beta^{-2(M-1)}(c_{2M}) = A(c) \subset S'$ so $i(c,A(c))=0$. This contradicts with non-degeneracy assumption.
\end{proof}

By non-degeneracy assumption $i(c_{i},c_{i+1})\neq 0$ for every $i>0$. Thus by Claim \ref{claim:B} there is $N>0$ such that such 
\[ \ex^{k}(\beta)^{N}|_S = T_{c_1}^{k}T_{c_2}^{-k}\cdots T_{c_{2N-1}}^{k}T_{c_{2N}}^{-k}\beta^{N}|_S \in MCG(S)\] satisfies the assumptions of Theorem \ref{theorem:Fathi}.
Hence for any given $R>0$, whenever $|k|$ is sufficiently large, $\ex^{k}(\beta)^{N}|_S$ is pseudo-Anosov whose dilatation $\lambda(\ex^{k}(\beta)^{N}|_S)$ is $>R$. 
Since \begin{align*}
\ent( \ex^{k}(\beta)) & = \frac{\ent ( \ex^{k}(\beta)^{N} )}{N} \geq  \frac{\ent( \ex^{k}(\beta)^{N}|_S)}{N} =  \frac{\log \lambda(\ex^{k}(\beta)^{N}|_S)}{N} \geq \frac{\log R}{N} \end{align*}
 the set $\{\ent(\ex^{k}(\beta)) \: | \: k \in \Z\}$ is unbounded. 
\end{proof}

Our proof shows that as $k$ increase, a non-degenerate $k$-iterated exchange move increases the entropy, the complexity of dynamics, as long as $k$ is sufficiently large. Since the core of Birman-Menasco's proof of (non)finiteness theorem is to reduce the complexity (the number of singular points) of braid foliation of Seifert surface, it is natural to expect relations between the entropy and braid foliation. 

\begin{question}
If a braid $\beta$ is obtained from $\beta'$ by an exchange move reducing the complexity of braid foliation, then $\ent(\beta) \leq \ent(\beta')$ ?
\end{question} 

\section*{Acknowledgement}
The author is partially supported by JSPS KAKENHI Grant Numbers19K03490, 16H02145.  This research was supported in part by funding from the Simons
Foundation and the Centre de Recherches Math\'ematiques, through the Simons-CRM
scholar-in-residence program.

\end{document}